\crefname{theorem}{Theorem}{Theorems}
\crefname{thm}{Theorem}{Theorems}
\crefname{mainthm}{Theorem}{Theorems}
\crefname{lemma}{Lemma}{Lemmas}
\crefname{lem}{Lemma}{Lemmas}
\crefname{remark}{Remark}{Remarks}
\crefname{claim}{Claim}{Claims}
\crefname{prop}{Proposition}{Propositions}
\crefname{proposition}{Proposition}{Propositions}
\crefname{defn}{Definition}{Definitions}
\crefname{corollary}{Corollary}{Corollaries}
\crefname{conjecture}{Conjecture}{Conjectures}
\crefname{question}{Question}{Questions}
\crefname{section}{Section}{Sections}
\crefname{figure}{Figure}{Figures}
\crefname{table}{Table}{Tables}
\theoremstyle{plain}
\newtheorem{thm}{Theorem}
\newtheorem*{thm*}{Theorem}
\newtheorem{lemma}[thm]{Lemma}
\newtheorem{prop}[thm]{Proposition}
\theoremstyle{definition}
\newtheorem{defn}[thm]{Definition}
\theoremstyle{remark}
\numberwithin{equation}{section}
\newcommand\cB{\mathcal{B}}
\newcommand\cG{\mathcal{G}}
\newcommand\cL{\mathcal{L}}
\newcommand\cM{\mathcal{M}}
\newcommand\cN{\mathcal{N}}
\newcommand{\cI}{{\mathcal I}}
\newcommand{\cT}{{\mathcal T}}
\newcommand{\cX}{{\mathcal X}}
\newcommand{\cW}{{\mathcal W}}
\newcommand\Z{\mathbb{Z}}
\renewcommand{\P}{{\mathbb P}}
\newcommand\E{{\mathbb E}}
\renewcommand{\le}{\leqslant}
\renewcommand{\ge}{\geqslant}
\author[M. Bassan]{Michal Bassan}
\address{University of Oxford, 
Department of Statistics
and Keble College}
\email{michal.bassan@keble.ox.ac.uk}
\author[S. Donderwinkel]{Serte Donderwinkel}
\address{University of Groningen, 
Bernoulli Institute for Mathematics, Computer Science and AI, 
and CogniGron (Groningen Cognitive Systems and Materials Center)}
\email{s.a.donderwinkel@rug.nl}
\author[B. Kolesnik]{Brett Kolesnik}
\address{University of Warwick, 
Department of Statistics}
\email{brett.kolesnik@warwick.ac.uk}
\keywords{asymptotic enumeration; 
infinite divisibility;
graphical sequence; 
integrated random walk; 
lattice point; 
majorization; 
persistence probability; 
plane tree; 
random walk;
renewal sequence}
\subjclass[2010]{05A15;	
05A16; 	
05A17; 	
05C30; 	
11P21; 	
51M20; 	
52B05; 	
60E07;	
60G50;	
60K05}	
\begin{document}

\title[Graphical sequences and plane trees]
{Graphical sequences and plane trees}

\begin{abstract} 
Balister, the second author, 
Groenland, Johnston and Scott recently 
showed that there are
asymptotically $C4^n/n^{3/4}$ many
unordered sequences that occur as degree sequences of graphs.
Combining 
limit theory for infinitely divisible distributions
with 
a new 
bijective connection between a 
class of 
random walk trajectories
and a subset counting formula from additive 
number theory, we describe $C$ 
in terms of 
Walkup's number of 
rooted plane trees. 
The bijection is related to an instance of the 
L\'evy--Khintchine formula.
Our main result complements a result of 
Stanley, that ordered graphical sequences 
are related to quasi-forests. 
\end{abstract}

\maketitle

\section{Introduction}\label{S_intro}

In this work, we 
reveal a 
connection between trees embedded in the plane and
the degree sequences
of graphs. 
The proof involves an interplay 
between 
additive number theory,
random walks, renewal 
theory and infinite 
divisibility. 
In particular, we use  
the L\'evy--Khintchine formula, 
from the theory of L\'evy processes, 
to obtain a result 
in the area of combinatorial 
asymptotic enumeration. 
We expect this strategy to 
find more applications.

A sequence $d_1\le\cdots\le d_n$
is {\it graphical} if there is a graph on $n$ 
vertices with this degree sequence. Conditions under
which sequences are graphical are well-known; 
see Havel \cite{Hav55}, Hakimi \cite{Hak62} and 
Erd\H{o}s and Gallai \cite{EG60}. 
Recently, Balister, the second author, Groenland, Johnston and Scott
\cite{BDGJS22}
showed that 
the number $\cG_n$
of such sequences satisfies 
$n^{3/4}\cG_n/4^n\to C$. 
The constant $C$ is expressed as
a certain random walk probability $\rho$; 
see 
\eqref{E_BDGJS22} below. 

We show that 
$C$ can be expressed in terms of 
the number
$\cT_n$ 
of rooted unlabeled (cyclically distinct) plane trees with $n$ edges; see  \cref{F_T3} below.

\begin{thm}
\label{T_main}
As $n\to\infty$, we have that 
\[
\frac{n^{3/4}}{4^{n-1}}  \cG_n
\to 
\frac{e^\omega}{\Gamma(1/4)},
\]
where
\begin{equation}
\label{E_Walkup}
\omega=\sum_{k=1}^\infty
\frac{\cT_k}{k4^k}.
\end{equation} 
\end{thm}

As usual, 
$\Gamma(z)=\int_0^\infty t^{z-1}e^{-t}dt$ 
is the gamma function. 

Using P\'olya's enumeration theorem, 
Walkup \cite{Wal72} showed that 
\begin{equation}\label{E_Tn}
\cT_n
=
\frac{1}{n}
\sum_{d|n}{2d-1\choose d}\phi(n/d), 
\end{equation}
where $\phi(n)$ is Euler's totient function. Thus, we call $\omega$
{\it Walkup's constant}. 

By 
von Sterneck's subset counting formulas
(see, e.g., \cite{Bac02,Ram44}), 
$\cT_n$ is also the 
number of submultisets of
$\{0,1,\ldots,n-1\}$ of size $n$
that sum to $0$ mod $n$. 
This is key in our arguments, 
since, as we will see, it can be used to connect plane trees with 
lattice paths. 

\begin{figure}[h!]
\centering
\includegraphics[scale=1]{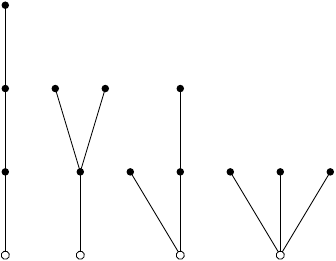}
\caption{The $\cT_3=4$ rooted unlabeled plane trees with 3 edges. 
}
\label{F_T3}
\end{figure}

\subsection{Degree sequences, via random walks}
\label{S_rho}

Erd\H{o}s and Moser 
observed a connection between 
random walks and  
degree sequences, 
in the context of graph tournaments, 
as discussed in Moon \cite{Moo68}.

A similar connection with random walks
is used in 
\cite{BDGJS22} to show that
\begin{equation}\label{E_BDGJS22}
\frac{n^{3/4}}{4^n}  \cG_n\to  
\frac{\Gamma(3/4)}{2^{5/2}\pi}
\frac{1}{\sqrt{1-\rho}}, 
\end{equation}
where $\rho$ is defined as follows. 

Let $Y_n$ be a lazy simple symmetric random walk,
started at $Y_0=0$, with increments 
$Y_{k+1}-Y_k$
equal to $\pm1$ with probability $1/4$, 
and $0$ with probability $1/2$. 
Let 
\[
\tau=\inf\{k\ge1:Y_k=0,\: A_k\le 0\}, 
\]
where
$A_k=\sum_{i=1}^k Y_i$ is the area
after $k$ steps. 
Then 
\begin{equation}\label{E_rho}
\rho=\P(A_\tau=0). 
\end{equation}

In proving \cref{T_main}, we 
obtain the following
description of $\rho$, in terms of 
Walkup's constant $\omega$. 

\begin{thm} 
\label{T_main2}
We have that 
$\rho=1-e^{-2\omega}$. 
\end{thm}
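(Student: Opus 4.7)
The plan is to deduce Theorem~\ref{T_main2} by matching the new asymptotic formula of Theorem~\ref{T_main} against the formula \eqref{E_BDGJS22} from \cite{BDGJS22}.

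Both formulas express the limit $\lim_{n\to\infty} n^{3/4}\cG_n/4^n$ as $\frac{\Gamma(3/4)}{2^{5/2}\pi}\cdot K$, with $K=1/\sqrt{1-\rho}$ in \eqref{E_BDGJS22} and $K=\exp(\xi)$ in Theorem~\ref{T_main}. Equating the two expressions for the limiting constant gives $\exp(\xi)=1/\sqrt{1-\rho}$; squaring and rearranging then yields $1-\rho=\exp(-2\xi)$, which is exactly Theorem~\ref{T_main2}. Crucially, this deduction is non-circular because the proof of Theorem~\ref{T_main} will not invoke $\rho$ or \eqref{E_BDGJS22}: the constant $\exp(\xi)$ will be produced by an independent argument.

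The real work therefore lies in the proof of Theorem~\ref{T_main}. The strategy, as indicated in the introduction, is to carry out a refined asymptotic analysis of $\cG_n$ using random walks and limit theory for infinitely divisible probability distributions, applied to the bivariate lazy walk $(Y_k,A_k)$. The main obstacle will be to identify the constant arising from the local-limit analysis with $\exp(\xi)$. I would approach this through the combinatorial interpretation of $\cT_n$ as submultisets of $\{0,1,\ldots,n-1\}$ of size $n$ summing to $0\bmod n$, together with Walkup's formula \eqref{E_Tn}, which together should allow one to recognize the relevant infinite exponential sum coming out of the lattice-walk/infinite-divisibility analysis as precisely $\xi$. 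Once that identification is complete, Theorem~\ref{T_main2} drops out by the comparison in the previous paragraph.
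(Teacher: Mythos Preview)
Your proposal reverses the logical dependency between the two theorems, and this reversal is precisely the gap. In the paper, \cref{T_main2} is proved \emph{first}, and \cref{T_main} is then obtained by substituting the value of $\rho$ into \eqref{E_BDGJS22}. Concretely, the paper computes the limit of $\cB_n^*/(n\cB_n)$ in two ways: once via the renewal structure of graphical bridges (giving $1-\rho$, equation \eqref{E_c1}), and once via \cref{P_LEt} and the Hawkes--Jenkins limit theorem \cref{L_ID} (giving $e^{-2\xi}$, equation \eqref{E_c2}). Equating these yields \cref{T_main2} directly; \eqref{E_BDGJS22} is then used only to pass from \cref{T_main2} to \cref{T_main}.

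Your plan, by contrast, asserts that \cref{T_main} will be established by an independent argument that never touches $\rho$ or \eqref{E_BDGJS22}, and then deduces \cref{T_main2} by comparison. But you do not supply that independent argument; you only sketch that infinite-divisibility methods applied to the lazy walk $(Y_k,A_k)$ should produce $\exp(\xi)$. That sketch is essentially what the paper does to analyze the bridge sequence $\cB_n$ and obtain \eqref{E_c2}, \emph{not} a direct analysis of $\cG_n$. The only route the paper (and \cite{BDGJS22}) has from $\cG_n$ to an explicit constant is through \eqref{E_BDGJS22}, which already contains $\rho$. So the step ``prove \cref{T_main} without invoking $\rho$'' is the entire content of the problem, and your proposal defers it rather than addresses it. As written, the argument is circular: the proof of \cref{T_main} you would need does not exist in the paper, and you have not provided one.
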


We note that \cref{T_main} follows by combining \cref{T_main2}  with 
\eqref{E_BDGJS22}, and noting that 
$\Gamma(3/4)=\pi\sqrt{2}/\Gamma(1/4)$. 

\subsection{Proof overview}\label{S_outline}

There is a natural representation of the 
lazy simple symmetric random walk $Y_n$, discussed above, 
in terms of a simple symmetric random walk
$X_n$ of twice the length. In this representation, the area process
$A_k=\sum_{i=1}^k Y_i=\frac{1}{2}\sum_{i=1}^k X_{2i}$. 
This relationship has a geometric 
interpretation; see the {\it diamond area}
discussed in 
\cref{S_GB} below.

If $X_{2n}=0$ and $A_k \ge0$ at all times $0\le k\le n$ for which $X_{2k}=0$, we call
$\{X_k,0\le k\le 2n\}$ a {\it graphical walk} of length $2n$. 
A \emph{graphical bridge} is a graphical walk whose area returns to $0$ 
at the end and a \emph{graphical meander} is a graphical walk 
whose area never returns to $0$. Times $k$ when $X_{2k}=A_k=0$ play the role of 
renewal times, 
since initially $X_0=A_0=0$. Hence a graphical walk
can be decomposed into a series of graphical bridges  
and one final graphical meander. That is,  
$\cW_n$ is a {\it delayed renewal sequence}, 
where the length of the final meander can be understood as the ``delay.'' 
The term $\Gamma(3/4)/2^{5/2}\pi$
in \eqref{E_BDGJS22} is related to the number 
$\cM_n$ of graphical meanders.

To describe 
the other term $1/\sqrt{1-\rho}$
in \eqref{E_BDGJS22}, we analyze the
renewal sequence (without a delay) $\cB_n$ of graphical bridges. 
We use the asymptotic transference theory, 
developed by Hawkes and Jenkins \cite{HJ78} 
(cf.\ Embrechts and Hawkes \cite{EH82}),
for infinitely divisible distributions. 
We call this method the 
{\it L\'evy--Khintchine method} (see 
\cref{S_ID} below), as it is 
related to the L\'evy--Khintchine transform, 
which associates each such distribution with
a corresponding L\'evy process, via another measure
called its L\'evy measure. 

We show that the L\'evy--Khintchine transform
$\cB_n^*$
of $\cB_n$ is related to $\cT_n$. In fact, $\cB_n^*=2\cT_n$.  
More specifically, we find that 
$p_n=e^{-2\omega}\cB^n/4^n$ is
an infinitely divisible probability distribution with L\'evy measure
$\nu_n=2\cT_n/n4^n$. The measure $\nu_n$ is regularly varying,
and it then follows by \cite{HJ78} that $p_n\sim \nu_n$. 

The proof that $\cB_n^*=2\cT_n$ involves 
a characterization (see \cref{lem:LKrenew} below) 
of the L\'evy--Khintchine transform of renewal sequences and some
delicate combinatorics, illustrated in 
\cref{F_bij,F_areas} below. Roughly speaking, 
by interleaving the steps of 
certain lattice paths, we relate
the set of cyclical shifts of graphical bridges
and the set of Walkup's plane trees, 
via von Sterneck's formula. 
Our arguments are geometric, 
using the diamond area  
mentioned above. 
See \cref{F_diamonds,F_walks5,F_phi}. 

Since $\cB_n$ is a renewal sequence, 
$\nu_n$ is also the 
harmonic renewal measure associated
with the 
sequence $\cB_n^{(1)}$ of {\it irreducible} graphical bridges. 
Using this, it follows 
that $e^{-2\omega}\nu_n/p_n$ is the 
harmonic moment $\E[1/\cI_n]$
of the number $\cI_n$ of irreducible graphical bridges
in a uniformly random graphical bridge of length $n$ 
and therefore $\E[1/\cI_n]\to e^{-2\omega}$ 
(since $p_n\sim \nu_n$). 

Moreover, it can be shown that $\cI_n$ converges to a 
negative binomial  with 
parameters $r=2$ and $p=1-\rho$. Intuitively, 
renewal times will only occur near the start and
end of the bridge. The numbers of such times on either side of the bridge 
are approximately independent and geometric
with $p=1-\rho$. Therefore, 
$\E[1/\cI_n]\to 1-\rho$, and so we find that 
$1-\rho=e^{-2\omega}$, yielding 
\cref{T_main2}. 
Hence, $1/\sqrt{1-\rho}=e^\omega$,
and so \cref{T_main} follows by 
\eqref{E_BDGJS22}.

\subsection{Ordered graphical sequences}
The polytope $D_n$ of degree sequences 
was introduced by 
Koren \cite{Kor73}, 
and studied by 
Peled and Srinivasan \cite{PS89}.
Stanley \cite[Corollary 3.4]{Sta91} found a formula
for the number 
of lattice points in $D_n\cap\Z^n$
with even sum, expressed in terms of quasi-forests. 
Such points are associated  with {\it ordered}
graphical sequences $d_1,\ldots,d_n$. 
On the other hand, \cref{T_main} 
relates the number of 
{\it unordered} graphical sequences 
$d_1\le\cdots\le d_n$
and Walkup's plane trees.

\subsection{Numerics}
Finally, let us note that the   
expression for $C$
in \cref{T_main}
can be used to approximate $\mathcal{G}_n$, for large $n$. 
Observing that the formula \eqref{E_Tn} for 
$\cT_n$ is dominated by the $d=n$ term, one can 
show that 
\[
\frac{n^{3/4}}{4^n}\cG_n
\to
C
\approx 
0.099094083237488745361449340935. 
\]
See 
the discussion of this work in \cite{BDGJS22}
for more details.

\subsection{Acknowledgments}
MB is supported by a Clarendon Fund Scholarship. 
SD acknowledges
the financial support of the CogniGron research center
and the Ubbo Emmius Funds (Univ.\ of Groningen).
BK was supported by a 
Florence Nightingale Bicentennial Fellowship (Oxford Statistics)
and a Senior Demyship (Magdalen College).

\section{The L\'evy--Khintchine method}
\label{S_ID}

A random variable $X$ is
{\it infinitely divisible} if, 
for all $n\ge1$, 
there are independent and identically distributed
$X_i$ such that $\sum_{i=1}^n X_i$ 
and $X$ are equal in distribution (see, e.g.,\cite{Fel68}). 

Suppose that a positive sequence 
$(1=a_0,a_1,\ldots)$ is summable, 
so that it is
proportional to a probability distribution $\pi_n$
on the non-negative integers $n\ge0$. 
As is well-known (see, e.g., \cite{HJ78})
such a $\pi_n$ is infinitely divisible 
if and only if, for some non-negative 
sequence $(a^*_1, a^*_2,\ldots)$, we have that 
\begin{equation}\label{E_AhatA}
\sum_{n=0}^\infty a_n x^n
=\exp\left(\sum_{k=1}^\infty\frac{a^*_k}{k}x^k\right). 
\end{equation}
In this case, we call $(a_0,a_1,\dots)$ an \emph{infinitely divisible sequence}.
By taking derivatives with respect to $x$ on both sides of \eqref{E_AhatA} 
and comparing coefficients, we see that (\ref{E_AhatA}) is equivalent to the recursion 
\begin{equation}\label{E_Arecursion}
na_n =\sum_{i=1}^n a^*_i a_{n-i},
\quad\quad n\ge1. 
\end{equation}

 Since, as discussed in \cite{EH82},  \eqref{E_AhatA}
is a special case of 
the L\'evy--Khintchine formula, 
we call $a_n^*$ the {\it L\'evy--Khintchine transform}
of $a_n$.
We note that, in fact, 
$\nu_n=a^*_n/n$ is the {\it L\'evy measure} 
(see, e.g., \cite{Ber96})
associated with the 
{\it L\'evy process} 
$\{\cX_t:t\ge0\}$, 
for which the law of the process $\cX_1$  
at time $t=1$ is described by 
the infinitely divisible probability measure 
\[
\pi_n
=a_n\exp\left(-\sum_{k=1}^\infty\frac{a^*_k}{k}\right),
\quad\quad n\ge0.
\]

In combinatorial settings, 
it is convenient to consider $a_n=A_n/\alpha^n$, 
where $A_n$ enumerates a class of objects of size $n$ 
that has exponential growth rate $\alpha$. 
If $a_n$ is infinitely divisible, then $A_n$ and 
$A^*_n=\alpha^n a_n^*$ also satisfy \eqref{E_AhatA} 
and \eqref{E_Arecursion}, and naturally we also call 
$A^*_n$ the L\'evy--Khintchine transform of $A_n$.

A positive sequence $\vartheta(n)$ is {\it regularly varying,}
with {\it index} $\gamma$, if
\[
\lim_{n\to\infty}\frac{\vartheta(\lfloor xn\rfloor)}{\vartheta(n)}
=x^\gamma,
\quad\quad \forall x>0. 
\]
One of our main tools for studying the asymptotics 
of sequences with a L\'evy--Khintchine transform is a result by 
Hawkes and Jenkins \cite{HJ78}, which shows that, if $a_n^*$ is regularly
varying with some index $\gamma<0$, then 
\begin{equation}\label{E_ID}
a_n
\sim 
\frac{a_n^*}{n}
\exp\left(\sum_{k=1}^\infty \frac{a^*_k}{k}\right).
\end{equation}
In other words, in terms of the underlying 
L\'evy process, the associated 
infinitely divisible probability measure $\pi_n$
and its corresponding L\'evy measure $\nu_n$ are asymptotically equivalent, $\pi_n\sim\nu_n$
as $n\to\infty$.

\subsection{Renewal sequences}

A subfamily of sequences $A_n$ that have a 
L\'evy-Khintchine transform is the class of \emph{renewal sequences}. 
Such a sequence arises when $A_n$ enumerates structures of length $n$ 
that can be decomposed into a sequence of irreducible parts. 
In this case, the generating function $A(x)=\sum_{n\ge 1} A_n x^n$ of $A_n$ satisfies 
\[A(x)=\frac{1}{1-A^{(1)}(x)}\]
where $A^{(1)}(x)=\sum_{n\ge 1} A^{(1)}_n x^n$ is the 
generating function of the irreducible structures (see, e.g., \cite{Fel68}). 

In our recent work \cite{BDK24}, we show that, 
when $A_n$ is a renewal sequence, 
$A^*_n$ takes a special form. 

\begin{lemma}[{\hspace{1sp}\cite[Lemma 2]{BDK24}}]
\label{lem:LKrenew}
Suppose that $A_n$ is a renewal sequence. 
Then: 
\begin{enumerate}
\item the L\'evy--Khintchine
transform $A_n^*$ is the number of pairs $(X,m)$, 
where $X$ is a structure of length $n$ and $0\le m <\ell$,
where $\ell=\ell(X)$ is the length of the first irreducible part of $X$, and 
\item we have that 
\begin{equation}\label{E_renewal}
\frac{A_n^*}{nA_n}
=\E\left[
\frac{1}{\cI_n}
\right],
\end{equation}
where $\cI_n$ is the number of irreducible 
parts in a uniformly random structure
of length $n$. 
\end{enumerate}
\end{lemma}

We note that, in terms of the underlying 
L\'evy process, 
the formula 
\eqref{E_renewal} says that,  
when $A_n$ is a renewal sequence with growth rate
$\alpha$, 
the L\'evy measure $\nu_n=A_n^*/n\alpha^n$
is the {\it harmonic renewal measure}
(see, e.g., \cite{GOT82})
associated with the measure 
$\mu_n=A_n^{(1)}/\alpha^n$.

With \cref{lem:LKrenew} in hand, 
one can use \eqref{E_ID} to deduce the asymptotic 
growth of a renewal sequence $A_n$ from the asymptotic growth of $A^*_n$, 
provided that there exists an $\alpha$ for which $A^*_n/\alpha^n$ is 
regularly varying with index $\gamma<0$. In our experience, 
the sequence $A^*_n$ tends to be more tractable than the 
renewal sequence $A_n$ itself, which lies at the core of our method.

\subsection{Delayed renewal sequences}

A related family of sequences $A_n$ is the class of \emph{delayed renewal sequences}. 
Such sequences arise when $A_n$ enumerates structures of length $n$ 
that can be decomposed into exactly one special part (the ``delay'') and 
a sequence of irreducible parts. In this case, the generating function $A$ of $A_n$ satisfies 
\[A(x)=\frac{D(x)}{1-A^{(1)}(x)},\]
where $D(x)$ is the generating function of the delay and $A^{(1)}(x)$ is the generating function of the irreducible structures. 

Although $A_n$ itself may not necessarily admit a L\'evy--Khintchine transform, 
we illustrate in this work that the asymptotic growth of $A_n$ can still 
be understood by studying the delay and its renewal structure separately. 
We believe that this method
will be useful in enumerating 
many other combinatorial sequences of interest.

\subsection{Application to $\cG_n$}

As discussed  in \cref{S_outline}, 
the sequence $\cG_n$ that enumerates graphical sequences is a delayed renewal sequence. 
The $1/\Gamma(1/4)$ factor in the asymptotics corresponds to the delay. 
To complete the proof of \cref{T_main}, 
it remains to consider the asymptotics
of the renewal sequence $\cB_n$ of
 graphical bridges of length $2n$, to which we will apply the L\'evy--Khintchine method, as 
described above. 
 
More specifically, we consider $a_n=\cB_n/4^n$,  
and its L\'evy--Khintchine transform $a^*_n$, and show that
the limit in \eqref{E_ID} equals $1-\rho$. 
Finally, we use \cref{lem:LKrenew} and a bijective argument to prove that 
$a^*_n=2\cT_n/4^n$, 
leading to the appearance of $\cT_n$ 
and $\omega$
in 
\cref{T_main,T_main2}. 

The two parts of \cref{lem:LKrenew} play 
an essential role. 
\cref{lem:LKrenew}(1) 
allows us to approach the
asymptotics of $\cB_n$ combinatorially, and \cref{lem:LKrenew}(2) 
opens the door to probability theory. 
These approaches meet halfway, so to speak, to give an explicit description of the asymptotics
of $\cG_n$.

\section{Graphical bridges}
\label{S_GB}

Suppose that $X=(X_0,\ldots,X_{2n})$
is a walk with $\pm1$ increments. 

\begin{defn}
We call 
\begin{equation}\label{E_sigma}
\sigma(X)=
\frac{1}{2}
\sum_{i=1}^n X_{2i}
\end{equation}
the {\it diamond area} of $X$. 
\end{defn}

In fact, $\sigma(X)$ is the usual area
of the lazy version $\Lambda(X)$ of $X$, 
defined as follows. 
Let $\Delta_i=X_i-X_{i-1}$, for $i\ge1$, be the increments of 
$X$. The increments $\Delta'_i$ of $\Lambda(X)$ are
the averages $\Delta'_i=(\Delta_{2i}+\Delta_{2i-1})/2$, 
for $i\ge1$. Note that, if $X$ 
has increments $\pm1$ with probability $1/2$, 
then $\Lambda(X)$ has increments 
$\pm1$ with probability $1/4$, 
and $0$ with probability $1/2$. 

The reason for the name ``diamond area'' is that 
$\sigma(X)$ is the signed area under the walk, 
that lies outside a 
rotated checkerboard
of diamonds, 
as in \cref{F_diamonds}. 

\begin{figure}[h!]
\centering
\includegraphics[scale=0.8]{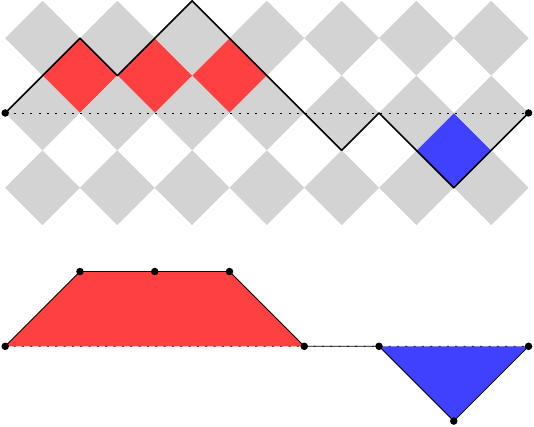}
\caption{A bridge $B$
with diamond area $\sigma(B)=2$, 
and its lazy version $\Lambda(B)$ below. 
}
\label{F_diamonds}
\end{figure}

For walks 
$X=(X_0,X_1,\ldots)$, 
we let $X^{(k)}=(X_0,X_1,\ldots,X_k)$.

\begin{defn}
A bridge $B=(B_0,\ldots,B_{2n})$ of length $2n$
with increments $\pm1$
is {\it graphical} 
if $\sigma(B)=0$ and 
$\sigma_{2k}=\sigma(B^{(2k)})\ge0$, for all $1\le k<n$. 
A graphical bridge $B$ is {\it irreducible} if 
$\sigma_{2k}>0$,  for all $1\le k<n$. 
We let $\cB_n$ be the number of graphical 
bridges of length $2n$.
See \cref{F_walks5}. 
\end{defn}

\begin{figure}[h!]
\centering
\includegraphics[scale=0.8]{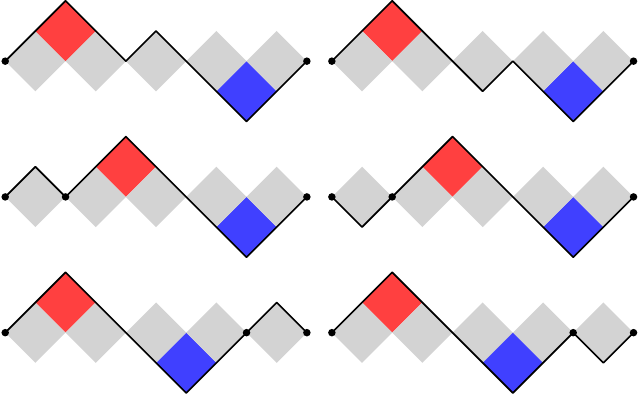}
\caption{There are $\cB_5=38$
graphical bridges of length $10$. 
Of these, 32 stay within 
the string of grey diamonds centered along the $x$-axis. 
The other 6 
are depicted above. 
The top two are irreducible. 
All others have two 
irreducible parts. Irreducible
parts are separated by solid dots. 
}
\label{F_walks5}
\end{figure}

\subsection{Relation to $\rho$}

Recall that $\rho$ is defined, in \cref{S_rho} above, in terms of 
a lazy random walk.  
We can rephrase $\rho$ in terms of a standard, 
simple symmetric random walk
$(X_n)$ 
with $\pm1$ increments, started from $X_0=0$, 
using its diamond area. 
Specifically, let $\sigma_{2k}=\sigma(X^{(2k)})$
denote the diamond 
area accumulated after $2k$ steps. 
Since, as discussed above, the diamond area
of $X$ is the usual area of its lazy version $\Lambda(X)$, 
it follows that 
\[
\rho=\P(\sigma_{2\kappa}=0),
\quad\quad\quad
\kappa=\inf\{k\ge1:X_{2k}=0,\: \sigma_{2k}\le 0\}. 
\]

 To see how $\rho$ relates to our problem,
we observe that a bridge $B=(B_0,\ldots,B_{2n})$ 
(with $\pm1$ increments) 
is graphical 
if and only if $\sigma(B)=0$ and 
$\sigma_{2k}=\sigma(B^{(2k)})\ge0$, for all times $2k$ 
for which $B_{2k}=0$, since $\sigma$ is monotone between 
such {\it diagnostic times}. 
Then, roughly speaking, $\rho$ is the probability 
that, the first time a random walk trajectory 
is in danger of not satisfying $\sigma_{2k}\ge0$, 
at such a diagnostic time $2k$, the condition is in fact 
satisfied with equality $\sigma_{2k}=0$. 

Consider now a random walk $X=(X_0,X_1,\ldots)$. 
If $X_{2k}=\sigma_{2k}=0$ at the first diagnostic time $2k<2n$, 
then 
$(X_0,\ldots,X_{2n})$ is a graphical bridge if and only if
$(X_{2k},\ldots,X_{2n})$ is a graphical bridge. 
Therefore, times at which $X_{2k}=\sigma_{2k}=0$ 
are of crucial importance, 
as they decompose a graphical bridge
into its irreducible parts, and correspond to {\it renewal times}
in the analysis. 

\begin{defn}
We let $\cI_n$
denote the number of irreducible parts
in a uniformly random 
graphical bridge
of length $2n$. 
\end{defn}

By (the proof of)
\cite[Proposition 21]{DK24a}  
it follows that 
\begin{equation}\label{E_NB}
\cI_n\overset{d}{\to}1+\cX,
\end{equation}
where $\cX$ is a negative binomial 
with parameters $r=2$ and $p=1-\rho$. 
The appearance of the negative binomial random variable can, 
intuitively, be explained as follows. With high probability, 
renewal times occur only very close to the 
start and end of the bridge. Moreover, 
the number of renewal times on either side
are approximately geometric
and independent.

\subsection{Calculating $\rho$}

We observed that 
a graphical bridge can be decomposed into 
a series of irreducible parts, and so $\cB_n$ is a renewal sequence. 
As discussed in \cref{S_ID}, this means that the generating function
$\cB(x)=\sum_n \cB_nx^n$ 
can be expressed as 
\[
\cB(x)=\frac{1}{1-\cB^{(1)}(x)},
\]
where $\cB^{(1)}(x)=\sum_n \cB_n^{(1)}x^n$ 
is the generating function for the number 
$\cB_n^{(1)}$ of {\it irreducible} graphical bridges
of length $2n$.

Therefore, by \cref{lem:LKrenew}(2), 
we have 
\begin{equation}\label{E_Ln1}
\frac{\cB_n^*}{n\cB_n}
=\E\left[\frac{1}{\cI_n}\right].
\end{equation}
Combining 
\eqref{E_NB} and \eqref{E_Ln1}, we find that 
\begin{equation}
\label{E_c1}
\frac{\cB^*_n}{n\cB_n}\to 1-\rho. 
\end{equation}
Hence, in proving \cref{T_main2}, 
the following is key. 

\begin{prop}
\label{P_LEt}
We have that
$\cB^*_n=2 \cT_n$. 
\end{prop}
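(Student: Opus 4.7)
The proposition is equivalent to the generating function identity
\[
\cB(x)=\exp\Bigl(\sum_{k\ge 1}\tfrac{2\cT_k}{k}\,x^k\Bigr),
\]
or to the convolution recurrence $n\cB_n=2\sum_{k=1}^{n}\cT_k\cB_{n-k}$.

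First I would reinterpret $\cT_k$ via Burnside's lemma. Rewriting Walkup's formula \eqref{E_Tn} using $\binom{2d-1}{d}=\tfrac12\binom{2d}{d}$ gives
\[
2k\,\cT_k=\sum_{d\mid k}\binom{2d}{d}\phi(k/d),
\]
which is exactly the Burnside count for the cyclic group $C_{2k}$ acting by rotation on the $\binom{2k}{k}$ many $\pm 1$-bridges of length $2k$: rotation by $r$ positions fixes a bridge iff its period divides $\gcd(r,2k)$, giving $\binom{2d}{d}$ fixed bridges when $\gcd(r,2k)=2d$, and there are $\phi(k/d)$ such $r$. Hence $\cT_k$ counts the cyclic equivalence classes of $\pm 1$-bridges of length $2k$, and $2\cT_k$ counts the same orbits paired with a choice of orientation.

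Second, I would establish the convolution recurrence by a bijection. The LHS counts pointed graphical bridges $(B,j)$ of length $2n$ with marked even position $2j$, $1\le j\le n$; the RHS counts triples $(\omega,\epsilon,B')$ where $\omega$ is a cyclic $\pm 1$-bridge orbit of length $2k$, $\epsilon\in\{\pm\}$ an orientation, and $B'$ a graphical bridge of length $2(n-k)$. Given $(B,j)$, the map would cyclically rotate $B$ to bring the mark to a canonical starting point, excise an initial ``bridge segment'' of some length $2k$ representing the pair $(\omega,\epsilon)$, and identify the re-glued remainder as $B'$. This dovetails with the renewal decomposition $\cB(x)=1/(1-\cB^{(1)}(x))$: by $\log\cB(x)=\sum_{m\ge 1}\cB^{(1)}(x)^m/m$, one has $\cB_n^*/n=\sum_m[x^n]\cB^{(1)}(x)^m/m$, a weighted sum over ordered tuples of irreducible graphical bridges, and the excision turns each oriented cyclic bridge orbit into such a weighted tuple via a cycle-lemma accounting.

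The main obstacle is formalising the excision. Because the two-dimensional walk $(B_{2k},\sigma_{2k})$ is history-dependent (its increments involve the current $\tilde B_k=B_{2k}/2$), the classical one-dimensional cycle lemma does not apply directly, so the correct segment must be identified by hand: most naturally the piece between the mark and the first cyclic return of $(B_{2k},\sigma_{2k})$ to $(0,0)$. One must then verify both that this segment forms a $\pm 1$-bridge of length $2k$ and that the remainder, after re-gluing, satisfies the graphical conditions $\sigma=0$ and partial areas nonnegative. The orientation factor $2$ is expected to arise from the reflection symmetry $B\mapsto -B$, and the Burnside-type accounting for periodic bridges must line up exactly with the $\phi(k/d)$ factors in Walkup's formula to yield the identity $\cB_n^*=2\cT_n$.
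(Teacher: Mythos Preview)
Your Burnside reading of Walkup's formula is correct and elegant: $\cT_k$ really is the number of $C_{2k}$-orbits of $\pm1$-bridges of length $2k$. But the proof itself has a genuine gap. The bijection you propose for $n\cB_n=2\sum_k\cT_k\cB_{n-k}$ is only outlined; the steps you flag as ``obstacles'' (excising a segment, checking the remainder is graphical, and explaining the factor~$2$) are precisely the content of the argument, and none of them is carried out. In particular, the factor~$2$ cannot come from the reflection $B\mapsto -B$ in the way you suggest: this map sends a graphical bridge to one with all partial diamond areas $\le0$, so it does not act on graphical bridges at all; and interpreting $2\cT_k$ as ``oriented necklaces'' fails already at $k=1$, where the unique orbit is reversal-symmetric. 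Without a concrete construction and a genuine source for the~$2$, the recurrence remains unproved.

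The paper avoids these difficulties by \emph{not} attempting to split a marked graphical bridge into a necklace and a smaller graphical bridge. Instead it first uses the renewal structure to identify $\cB_n^*$ directly with the number of pairs $(B,i)$ where $B$ is a graphical bridge of length $2n$ and $0\le i$ is strictly less than half the length of the first irreducible part of $B$. The bijection is then a bare cyclic shift: $(B,i)\mapsto$ the bridge obtained by rotating $B$ by $2i$ steps. No excision, no remainder to check; injectivity holds because one never rotates past the first renewal, and surjectivity is a Raney-type argument. The image is the set $\cN_n'$ of bridges of length $2n$ with diamond area $\sigma\equiv0\pmod n$, not a set of decomposed objects. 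The identification $\cN_n'=2\cT_n$ is then handled by two separate combinatorial lemmas (an odd/even-increment bijection to lattice paths with area $\equiv0\pmod n$, and a submultiset argument), and it is in the lattice-path step---splitting according to whether the last step is $\uparrow$ or $\rightarrow$---that the factor~$2$ actually appears. Your necklace interpretation of $\cT_k$ is a pleasant alternative to the submultiset description, but it does not by itself shortcut the need for a working bijection at the level of $\cB_n^*$.
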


We prove  
\cref{P_LEt}
 in the next two sections. 
 For now, let us note 
that our main result follows. 

\begin{proof}[Proof of \cref{T_main2}]
By \eqref{E_Tn}
and \cref{P_LEt}, 
\[
\frac{\cB^*_n}{4^n}
\sim\frac{1}{n4^n}{2n\choose n}
\sim \frac{1}{\sqrt{\pi}n^{3/2}}
\]
is regularly varying with
index $\gamma=-3/2$. 
Therefore, by \eqref{E_ID}, 
\begin{equation}
\label{E_c2}
\frac{\cB^*_n}{n\cB_n}
\to \exp(-2\omega).
\end{equation}
Combining \eqref{E_c1} and \eqref{E_c2},
we find that $\rho=1-\exp(-2\omega)$, as claimed. 
\end{proof}

\section{Combinatorial lemmas}
\label{S_comb}

In this section, we 
prove two combinatorial lemmas, 
showing that $2\cT_n$ can be described
in terms of areas $\alpha$ below  
lattice paths $L$ and diamond areas 
$\sigma$ under bridges $B$.

\subsection{Lattice paths}

Suppose that $L$ is an $\uparrow,\rightarrow$
lattice path from $(a_1,b_1)$ to $(a_2,b_2)$,
for some integers 
$a_1\le a_2$ and $b_1\le b_2$. 
We let $\alpha(L)$ denote the area of the region between 
$L$ and the lines $x=a_2$ and $y=b_1$.

\begin{defn}
We let  $\cN_n$ be the number of lattice paths $L$ 
from $(0,0)$ to $(n,n)$ such that 
$\alpha(L)\equiv0$ mod $n$. 
\end{defn}

To see the relationship
between $\cN_n$ and $\cT_n$, first note that
the area of a lattice path $L$
from $(0,0)$ to $(n,n)$ is 
\begin{equation}\label{E_alpha}
\alpha(L)=\sum_i u_i, 
\end{equation}
summing over the sequence $u_1\le\cdots\le u_n$, 
where 
$u_i$ is the number of $\uparrow$ steps before the $i$th 
$\rightarrow$ step of $L$. In other words, if we picture $L$ as
a bar graph, then the $u_i$ are the heights of the bars. 
Hence, $\cN_n$ is the number of submultisets
of $\{0,1,\ldots,n\}$ of size $n$ that sum to $0$ mod $n$. 
On the other hand, recall that $\cT_n$ is equal to the number of 
submultisets $\{0,1,\ldots,n-1\}$ of size $n$ that sum to $0$ mod $n$. 

Using these observation, we show the following. 

\begin{lemma}
\label{L_comb1}
We have that 
$\cN_n = 2\cT_n$. 
\end{lemma}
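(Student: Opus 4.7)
The plan is to combine the two combinatorial descriptions already in hand with a short averaging argument. Recall the excerpt has noted that $\cN_n$ equals the number of submultisets $M$ of $\{0,1,\ldots,n\}$ of size $n$ summing to $0\pmod n$, while by definition $\cT_n$ counts the analogous submultisets of $\{0,1,\ldots,n-1\}$. I would build a bridge between the two via a map $\Phi$ that sends such an $M$ to the multiset $\bar M$ obtained by relabelling every occurrence of $n$ in $M$ as $0$; this preserves both the size and the sum modulo $n$, so $\Phi(M)\in\cT_n$. Writing $c(\bar M)$ for the number of zeros in $\bar M$, the fibre $\Phi^{-1}(\bar M)$ is in bijection with the $c(\bar M)+1$ ways of choosing how many of these zeros to relabel as $n$, so
\[
\cN_n=\sum_{\bar M\in\cT_n}\bigl(c(\bar M)+1\bigr)=\cT_n+\sum_{\bar M\in\cT_n}c(\bar M).
\]

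It remains to prove $\sum_{\bar M\in\cT_n}c(\bar M)=\cT_n$, which is the one nontrivial step. I would do this via the $\Z/n$-action on $\cT_n$ sending $\bar M$ to $\bar M+k$, the multiset formed by reducing each element $+k$ modulo $n$. The sum shifts by $nk\equiv 0$, so this preserves $\cT_n$. For any fixed $\bar M$, each of its $n$ elements equals $0$ under exactly one shift, giving the pointwise identity
\[
\sum_{k=0}^{n-1}c(\bar M+k)=n.
\]
If an orbit $O$ has size $\ell$ (necessarily dividing $n$), the shifts $\bar M,\bar M+1,\ldots,\bar M+(n-1)$ traverse $O$ exactly $n/\ell$ times, so the displayed identity forces $\sum_{\bar M\in O}c(\bar M)=\ell=|O|$. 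Summing over the orbits partitioning $\cT_n$ yields $\sum_{\bar M\in\cT_n}c(\bar M)=|\cT_n|=\cT_n$, as required.

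The main obstacle, such as it is, is simply seeing the right bookkeeping: rather than giving a clean $2$-to-$1$ map, $\Phi$ has variable fibre sizes $c(\bar M)+1$, so the factor of $2$ in $\cN_n=2\cT_n$ emerges not from a direct involution but from averaging zero counts over the $\Z/n$-orbits. Once this is recognised, both halves of the argument are mechanical.
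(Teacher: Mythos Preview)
Your argument is correct. The fibre count $c(\bar M)+1$ is right, the $\Z/n$-action preserves $\cT_n$ as you say, and the orbit-by-orbit averaging cleanly yields $\sum_{\bar M}c(\bar M)=\cT_n$.

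Your route, however, is genuinely different from the paper's. The paper gives two proofs. The first is a formula manipulation using the explicit count $\cM_{n,k}$ of size-$k$ submultisets of $\{0,\ldots,n-1\}$ summing to $0$ mod $n$, decomposing $\cN_n=\sum_{k=0}^n\cM_{n,k}$ and then reversing the order of summation. The second is a direct bijective argument on lattice paths: split paths from $(0,0)$ to $(n,n)$ with area $\equiv 0$ by their last step, observe that the $\uparrow$-ending and $\rightarrow$-ending sets are in bijection via reflection through $x=y$, and that deleting the final $\uparrow$ step sends the $\uparrow$-ending paths bijectively to paths from $(0,0)$ to $(n,n-1)$ with area $\equiv 0$, which are exactly $\cT_n$. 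That proof produces an explicit $2$-to-$1$ correspondence, whereas yours obtains the factor $2$ only on average via the $\Z/n$-orbit structure. The paper's bijection is shorter and more transparent; your argument is self-contained (no appeal to the $\cM_{n,k}$ formula) and showcases a Burnside-type averaging technique that may generalise in directions where a clean involution is not available.
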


\begin{proof}
First, we claim that $\cT_n$ is the number of $\uparrow,\rightarrow$ lattice 
paths $L$ from $(0,0)$ to $(n,n-1)$ with $\alpha(L)\equiv0$
mod $n$. 
Indeed, submultisets $0\le u_1\le \cdots \le u_n\le n-1$ 
correspond to such 
$L$ with $\rightarrow$ steps at heights $u_i$, and 
so $\alpha(L)=\sum_i u_i\equiv0$
mod $n$. 

Let $\cL_n^\uparrow$ (resp.\ $\cL_n^\rightarrow$) be 
the number of lattice paths $L$ from $(0,0)$ to $(n,n)$ 
such that $\alpha(L)\equiv 0$ mod $n$, ending 
with an $\uparrow$ (resp.\ $\rightarrow$) step. 
Then $\mathcal{N}_n=\cL_n^\uparrow+\cL_n^\rightarrow$. 
To conclude,  
we show that 
$\cL_n^\uparrow=\cL_n^\rightarrow=\mathcal{T}_n$. 
Indeed, $\cL_n^\uparrow=\cL_n^\rightarrow$ 
follows by symmetry, reflecting over $x=y$, 
observing that divisibility by $n$ of the enclosed area is invariant under this reflection. 
Finally, $\cL_n^\uparrow=\mathcal{T}_n$, 
since if $L$ from $(0,0)$ to $(n,n)$ 
has last step $\uparrow$, then removing
this step yields a corresponding $L'$ from $(0,0)$ to $(n,n-1)$ 
with the same area as $L$. 
\end{proof}

\subsection{Bridges}

Next, we connect $\cN_n$ to bridges. 

\begin{defn}
We let $\cN_n'$ denote the number of bridges $B$
of length $2n$, with diamond area 
$\sigma(B)\equiv0$ mod $n$. 
\end{defn}

\begin{lemma}
\label{L_comb2}
We have that 
$\cN_n = \cN_n'$. 
\end{lemma}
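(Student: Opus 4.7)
The plan is to parametrize both $\pm 1$ bridges $B$ of length $2n$ and lattice paths $L$ from $(0,0)$ to $(n,n)$ by the $n$-element subset $S\subseteq\{1,\ldots,2n\}$ giving the positions of the $+1$-steps of $B$ (equivalently, of the $\uparrow$-steps of $L$). I will rewrite each of the congruences $\alpha(L)\equiv 0\pmod n$ and $\sigma(B)\equiv 0\pmod n$ as a congruence $\sum_{j\in S}w_j\equiv c\pmod n$ on a weighted sum of elements of $S$, and then exploit the fact that the two weight vectors that arise reduce to the same multiset of residues modulo $n$ in order to write down an explicit bijection.

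For the lattice path side, let $\{t_1<\cdots<t_n\}=S$ enumerate the $\uparrow$-positions. The $x$-coordinate just before the $i$-th $\uparrow$-step is $t_i-i$, so the area above-left of $L$ equals $\sum_i(t_i-i)=\sum_{j\in S}j-\binom{n+1}{2}$. Subtracting this from $n^2$ yields
\[
\alpha(L)=2n^2-\binom{n}{2}-\sum_{j\in S}j,
\]
so $\alpha(L)\equiv 0\pmod n$ iff $\sum_{j\in S}j\equiv-\binom{n}{2}\pmod n$. For the bridge side, swapping the order of summation in $\sigma(B)=\tfrac12\sum_{i=1}^n B_{2i}$ gives $\sigma(B)=\tfrac12\sum_{j=1}^{2n}\Delta_j(n+1-\lceil j/2\rceil)$; substituting $\Delta_j=2\mathbf{1}[j\in S]-1$ and using $\sum_{j=1}^{2n}(n+1-\lceil j/2\rceil)=n(n+1)$ produces
\[
\sigma(B)=\binom{n+1}{2}-\sum_{j\in S}\lceil j/2\rceil.
\]
Since $\binom{n+1}{2}\equiv\binom{n}{2}\pmod n$ and $2\binom{n}{2}=n(n-1)\equiv 0\pmod n$, the condition $\sigma(B)\equiv 0\pmod n$ becomes $\sum_{j\in S}\lceil j/2\rceil\equiv-\binom{n}{2}\pmod n$, the same target residue as on the path side.

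The two weight multisets $\{j\bmod n:1\le j\le 2n\}$ and $\{\lceil j/2\rceil\bmod n:1\le j\le 2n\}$ both equal the multiset in which each of $0,1,\ldots,n-1$ appears exactly twice. The permutation $\pi$ of $\{1,\ldots,2n\}$ defined by $\pi(2k-1)=k$ and $\pi(2k)=n+k$ satisfies $\pi(j)\equiv\lceil j/2\rceil\pmod n$, so $S\mapsto\pi(S)$ is a bijection on $n$-subsets of $[2n]$ under which $\sum_{j\in S}\lceil j/2\rceil\equiv\sum_{j'\in\pi(S)}j'\pmod n$. Restricting to subsets whose relevant sum is $\equiv-\binom{n}{2}\pmod n$, this carries the set counted by $\cN_n'$ onto the set counted by $\cN_n$.

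The main obstacle is the Abel-summation rewriting of $\sigma(B)$ in terms of $S$; once the clean formula $\sigma(B)=\binom{n+1}{2}-\sum_{j\in S}\lceil j/2\rceil$ is in hand, the reduction modulo $n$ aligns the target residues on both sides, and the recognition that the two weight sequences become the same multiset modulo $n$ makes the bijection $\pi$ a one-liner.
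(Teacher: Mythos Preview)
Your proof is correct. The formulas
\[
\alpha(L)=\tfrac{3n^2+n}{2}-\sum_{j\in S}j,
\qquad
\sigma(B)=\binom{n+1}{2}-\sum_{j\in S}\lceil j/2\rceil
\]
both check out, the reduction of the two target residues to $-\binom{n}{2}\pmod n$ is valid, and the permutation $\pi(2k-1)=k$, $\pi(2k)=n+k$ does exactly what you claim.

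The underlying bijection is in fact the same as the paper's: your $\pi$ sends the odd-indexed increments of $B$ to the first $n$ steps of $L$ and the even-indexed increments to the last $n$ steps, which is precisely the paper's decomposition of $B$ into $B^{\mathrm{odd}}$ and $B^{\mathrm{even}}$, each rotated into a lattice path $L_1$, $L_2$ and then concatenated. What differs is the verification. The paper argues geometrically, computing $\alpha(L)=\alpha(L_1)+\alpha(L_2)+\ell^2$ and identifying $\sigma(B)$ with the signed area between $B^{\mathrm{odd}}$ and $-B^{\mathrm{even}}$, so that $\alpha(L)=\sigma(B)+\ell n$; this relies on pictures. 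Your argument is purely algebraic: encode both objects by the subset $S$ of up-step positions, reduce each congruence to a weighted subset sum, and observe that the two weight vectors $(j)_{j\le 2n}$ and $(\lceil j/2\rceil)_{j\le 2n}$ are permutations of each other modulo $n$. Your route is more self-contained and would transfer easily to other step sets or moduli; the paper's route makes the equality $\alpha(L)-\sigma(B)=\ell n$ visible as an exact (not just mod $n$) identity, which is conceptually appealing but not needed here.
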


\begin{proof}
To obtain a correspondence 
between bridges $B=(B_0,\ldots,B_{2n})$ of length $2n$ such that 
$\sigma(B)\equiv0$ mod $n$ and lattice paths $L$ 
from $(0,0)$ to $(n,n)$
with 
$a(L)\equiv0$ mod $n$, we proceed as follows.

First, let $\Delta_i = B_i-B_{i-1}$ be the $i$th increment of $B$. 
For $1\le k\le n$, put 
\[
B^{\mathrm{odd}}_k=\sum_{i=1}^k \Delta_{2i-1},
\quad\quad
B^{\mathrm{even}}_k=\sum_{i=1}^k \Delta_{2i},
\]
so that 
$B^{\mathrm{odd}}=(B^{\mathrm{odd}}_1,\ldots,B^{\mathrm{odd}}_n)$ 
and 
$B^{\mathrm{even}}=(B^{\mathrm{even}}_1,\ldots,B^{\mathrm{even}}_n)$ 
are the walks with 
the odd and even increments of $B$, respectively. 

Next, rotate $B^{\mathrm{odd}}$ 
counterclockwise by $\pi/4$
to obtain a lattice
path $L_1$ from $(0,0)$ to $(n-\ell,\ell)$ for some $\ell$ 
(since $B^{\mathrm{odd}}$ has $n$ steps).
Likewise, rotate $B^{\mathrm{even}}$ 
counterclockwise by $\pi/4$ to obtain a lattice walk $L_2$ 
from $(n-\ell,\ell)$ to $(n,n)$ (since $B$ is a bridge). 
Let $L$ be the concatenation   
of $L_1$ and $L_2$. 
This procedure is depicted in \cref{F_bij}.

Geometric considerations (see \cref{F_areas}) imply that
\[
\alpha(L)
=\alpha(L_1)+\alpha(L_2)+\ell^2.
\]
Furthermore, observe that, by definition,
\[
\sigma(B)
=\frac{1}{2}\sum_{i=1}^n \sum_{j=1}^i \left( \Delta_{2j-1} + \Delta_{2j}\right)
=\frac{1}{2}\sum_{k=1}^n (B^{\mathrm{odd}}_k+B^{\mathrm{even}}_k),
\]
which equals the signed area enclosed by 
$B^{\mathrm{odd}}$ and $-B^{\mathrm{even}}$, 
as depicted in \cref{F_areas}. 
Therefore, it can be seen that 
\[
\sigma(B)
=\alpha(L_1)-[\ell(n-\ell)-\alpha(L_2)]
=\alpha(L)-\ell n.
\]
Since $\alpha(L)\equiv \sigma(B)$ mod $n$, 
this completes the proof. 
\end{proof}

\begin{figure}[h!]
\centering
\includegraphics[scale=0.8]{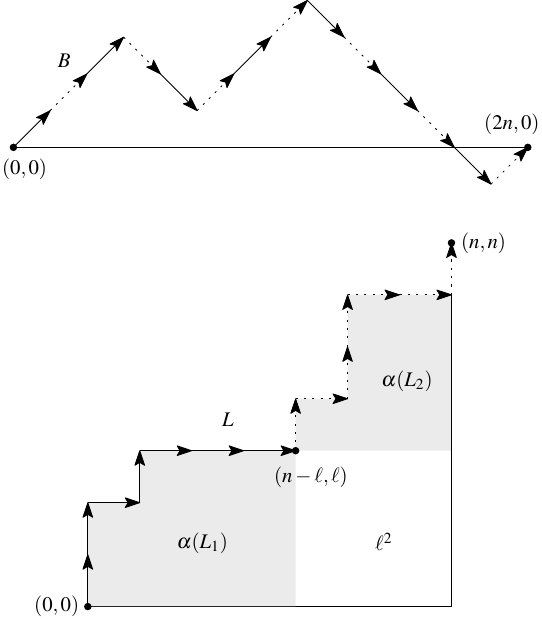}
\caption{The bijection in \cref{L_comb2}.}
\label{F_bij}
\end{figure}

\begin{figure}[h!]
\centering
\includegraphics[scale=0.8]{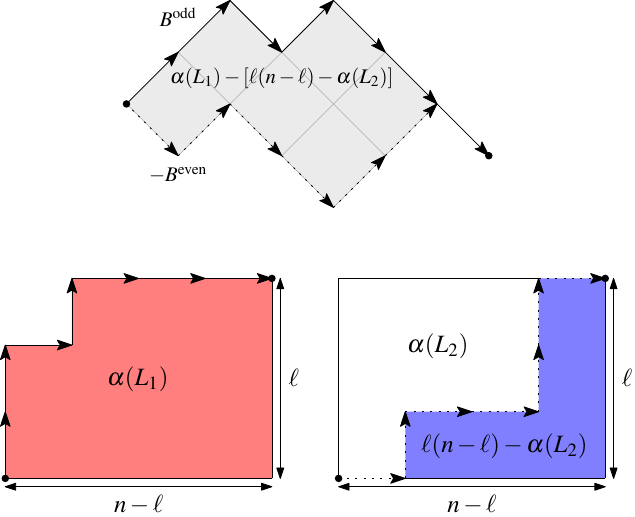}
\caption{Calculating areas in \cref{L_comb2}.}
\label{F_areas}
\end{figure}

\section{The L\'evy--Khintchine transform}\label{S_proof}

We are now in a position to
prove 
our key result \cref{P_LEt}, which identifies
$\cB_n^*= 2\cT_n$ as the L\'evy--Khintchine transform of 
$\cB_n$.

\begin{proof}[Proof of \cref{P_LEt}]
As discussed, 
$\cB_n$ is a renewal sequence, 
and so 
$\cB_n^*$ can be described in terms of 
\cref{lem:LKrenew}(1). 
Combining this with our combinatorial results 
\cref{L_comb1,L_comb2} above, 
it follows that, to show that $\cB_n^*=2\cT_n$, 
it suffices to find a bijection: 
\begin{itemize}
\item from the set
of ordered pairs 
$(B,i)$, where $B$ is a graphical bridge of length
$2n$, whose first irreducible part is of length $2\ell $, 
and $0\le i<\ell $
\item to the set of 
bridges $B'$
of length $2n$, with diamond area 
$\sigma(B')\equiv0$ mod $n$. 
\end{itemize}

We claim that such a bijection can be 
constructed 
as follows: 
For each such pair $(B,i)$, 
let $\phi(B,i)$ be the bridge $B'$ obtained
from $B$, via a cyclical shift to the right by $2i$.
Note that, geometrically, $B'$ can be obtained from 
$B$ by shifting the $x$-axis up or down by some multiple of $2$ 
and then starting the bridge from a particular intersection point of 
the new $x$-axis and the bridge $B$. Only the former operation 
affects the total diamond area. In fact, by \eqref{E_sigma}, 
it can be seen that such a shift changes the area by a multiple of $n$. 
Since $\sigma(B)=0$, it follows, 
again by 
\eqref{E_sigma}, that $\sigma(B')\equiv0$ mod $n$. 
See \cref{F_phi}. 

\begin{figure}[h!]
\centering
\includegraphics[scale=0.8]{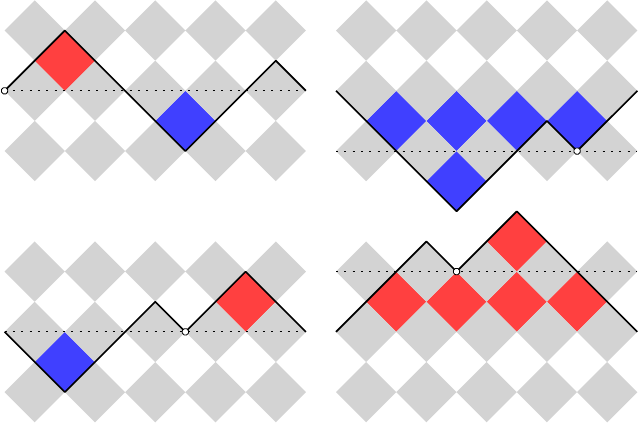}
\caption{A graphical bridge $B$ (top left) of length
10, with first irreducible part of length 8. 
The bridge $B$ is $\phi(B,0)$. Its 
other shifts
$\phi(B,i)$, for $1\le i<4$, are also depicted. 
All bridges have diamond area divisible by 5.
To find the inverse mapping $\phi^{-1}$, 
we shift the $x$-axis (see dotted line) by some factor of 2 
so that the diamond area is equal to 0, 
and then find the rightmost point (open dot) that starts a graphical sequence.  
Such a point exists by Raney's lemma. 
}
\label{F_phi}
\end{figure}

On the other hand, for any $B'$ with $\sigma(B')\equiv0$ mod $n$, 
the inverse $\phi^{-1}(B')$
is found as follows. Select the unique 
shift of the $x$-axis for which 
the diamond area becomes 0, and then choose
the rightmost starting point $2n-2i$ for which the 
resulting bridge $B$ is graphical; then $\phi^{-1}(B')=(B,i)$. Such
a point exists by Raney's lemma \cite{Ran60}, and by choosing $i$ 
minimal we ensure that $2i$ is smaller than the 
length of the first irreducible part of $B$.
\end{proof}

\makeatletter
\renewcommand\@biblabel[1]{#1.}
\makeatother

\providecommand{\bysame}{\leavevmode\hbox to3em{\hrulefill}\thinspace}
\providecommand{\MR}{\relax\ifhmode\unskip\space\fi MR }
\providecommand{\MRhref}[2]{%
  \href{http://www.ams.org/mathscinet-getitem?mr=#1}{#2}
}
\providecommand{\href}[2]{#2}

\end{document}